\newtheorem{thm}{Theorem}
\newtheorem{cor}[thm]{Corollary}
\newtheorem{lem}[thm]{Lemma}
\newtheorem*{thmJR}{Theorem JR}
\newtheorem*{thmS}{Theorem S}
\newtheorem*{xexa}{Example}
\newcommand{\length}{\operatorname{length}}
\begin{document}

\baselineskip=17pt


\title[Piecewise continuous mappings]{On piecewise continuous mappings of metrizable spaces}

\author[S. Medvedev]{Sergey Medvedev}

\address{Department of Mathematical and Functional Analysis \\ South Ural State University \\ pr. Lenina, 76,  Chelyabinsk, 454080 Russia}

\email{medvedevsv@susu.ru}

\date{}

\begin{abstract}
Let $f \colon X \rightarrow Y$ be a resolvable-measurable mapping of a metrizable space $X$ to a regular space $Y$. Then $f$ is piecewise continuous. Additionally, for a metrizable completely Baire space $X$, it is proved that $f$ is resolvable-measurable if and only if it is piecewise continuous.
\end{abstract}

\subjclass[2010]{Primary 54H05; Secondary 03E15, 54C10}

\keywords{Resolvable-measurable mapping, piecewise continuous mapping, $\mathbf{\Sigma}^0_2$-measurable mapping, completely Baire space}

\maketitle


In an old question Lusin asked if any Borel function is necessarily countably continuous. This question was answered negatively by Keldi\v{s} \cite{K34}, and an example of a Baire class 1 function which is not decomposable into countably many continuous functions was later found by Adyan and Novikov \cite{AN}; see also the paper of van Mill and Pol \cite{vMP}.

The first affirmative result was obtained by Jayne and Rogers \cite[Theorem 1]{JR}.

\begin{thmJR}[Jayne--Rogers]
If $X$ is an absolute Souslin-$\mathcal{F}$ set and $Y$ is a metric space, then $f \colon X \rightarrow Y$ is $\mathbf{\Delta}^0_2$-measurable if and only if it is piecewise continuous.
\end{thmJR}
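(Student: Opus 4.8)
The two implications are of very different character, so the plan is to prove them separately, investing essentially all the work in the forward (decomposition) direction. For the \emph{easy} direction, assume $f$ is piecewise continuous, say $X=\bigcup_n X_n$ with each $X_n$ closed and $f|_{X_n}$ continuous. For any closed $C\subseteq Y$, each $(f|_{X_n})^{-1}(C)$ is closed in $X_n$, hence closed in $X$, so $f^{-1}(C)=\bigcup_n (f|_{X_n})^{-1}(C)$ is $F_\sigma$, i.e. $\mathbf{\Sigma}^0_2$. Taking $C=Y\setminus U$ shows $f^{-1}(U)\in\mathbf{\Pi}^0_2$ for every open $U$; the same computation with $U$ in place of $C$ (a relatively open subset of a closed set is $F_\sigma$ in a metrizable space) gives $f^{-1}(U)\in\mathbf{\Sigma}^0_2$ as well, whence $f^{-1}(U)\in\mathbf{\Delta}^0_2$. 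I note that this direction uses no hypothesis beyond metrizability of $X$ and $Y$.

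For the converse I would first reduce to $Y$ separable (an absolute Souslin-$\mathcal{F}$ space is separable metrizable, so $f(X)$ is separable) and fix a countable base $\{B_n\}$ of balls. The engine is an \emph{oscillation lemma}: if $F$ is a nonempty Baire subspace and $g=f|_F$ has $F_\sigma$ preimages of open sets, then for every $\epsilon>0$ the closed set $S_\epsilon=\{x\in F:\mathrm{osc}(g,x)\ge\epsilon\}$ is nowhere dense in $F$. To see this, cover $Y$ by balls $B_n$ of radius $<\epsilon/2$; if $S_\epsilon$ contained a nonempty relatively open $V$, then $V=\bigcup_n\bigl(V\cap g^{-1}(B_n)\bigr)$ exhibits $V$ as a countable union of sets $F_\sigma$ in $F$, and Baire category (valid since $V$ is open in the Baire space $F$) produces a relatively open $W\subseteq V$ with $g(W)\subseteq B_n$, so $\mathrm{osc}(g,\cdot)<\epsilon$ on $W\subseteq S_\epsilon$, a contradiction. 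Iterating the derivative $F\mapsto S_{1/m}(F)$ and intersecting at limits yields a strictly decreasing transfinite chain of closed sets; since such a chain in a second-countable space is countable, it terminates at a countable stage and decomposes $X$, for each fixed $m$, into countably many locally closed pieces on which the $f$-oscillation is $<1/m$.

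The crux is the \emph{assembly}: to pass from one countable decomposition per scale to a single countable family of \textbf{closed} pieces on which $f$ is genuinely continuous. The naive move, taking the common refinement of all the scale-$m$ decompositions, forces oscillation $0$ on each atom but produces continuum-many atoms, so it fails to give a countable cover. Moreover everything used so far relies only on $f$ being $\mathbf{\Sigma}^0_2$-measurable, and the indicator of the rationals shows this is not enough for a closed-piece decomposition; the extra content of $\mathbf{\Delta}^0_2$, namely that $f^{-1}(C)$ is also $F_\sigma$ for closed $C$ (equivalently, closed balls pull back to $F_\sigma$ sets), must be spent precisely here. It provides the symmetry that lets pieces of the form $f^{-1}(\overline{B})\cap(\text{closed})$ be taken closed, which is what eventually rules out the $\mathbf{1}_{\mathbb{Q}}$-type obstructions.

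I expect the genuine difficulty, and the point where the absolute Souslin-$\mathcal{F}$ hypothesis is indispensable, to be organizing the scales into one \emph{simultaneous} transfinite derivative of provably countable rank. Two issues must be confronted at once: the analytic domain need not be Baire (e.g. a $\mathbb{Q}$-like part), so the oscillation lemma is vacuous on such parts and they must be absorbed separately; and the branching of the scale-by-scale construction must be collapsed so that the final closed cover stays countable. The plan is to peel, at each stage, both the high-oscillation part and the meager-in-itself (non-Baire) part of the current closed set, and to use the hereditary Baire-category regularity of Souslin-$\mathcal{F}$ sets to show this peeling strictly decreases and halts below $\omega_1$; bounding this rank is the hard part. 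An alternative route is to invoke a Solecki-type dichotomy to exclude the Pawlikowski obstruction, but proving that is of comparable difficulty, so I would attempt the direct derivative argument first.
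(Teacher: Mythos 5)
Your backward direction is complete and correct (and, as you observe, uses nothing beyond metrizability), but the forward direction --- the entire content of the theorem --- is not proved: you stop at an avowedly open step (``bounding this rank is the hard part''), and the plan stalls even before that. The oscillation-derivative combined with ``peel the meager-in-itself part'' makes no progress on closed pieces that are uncountable but meager in themselves: a closed copy of $\mathbb{Q} \times 2^\omega$ (which is $\sigma$-compact, hence can sit inside an absolute Souslin-$\mathcal{F}$ domain) is its own non-Baire part, so one peeling step removes the whole piece and hands you back the original problem on it, unreduced; no decreasing chain of countable rank is generated, and the oscillation lemma is vacuous throughout such a piece. A smaller factual slip: the indicator of the rationals is \emph{not} $\mathbf{\Sigma}^0_2$-measurable (the irrationals are not $F_\sigma$ in $\mathbb{R}$), so it does not witness the insufficiency of $\mathbf{\Sigma}^0_2$-measurability; the genuine witnesses are Solecki's Lebesgue functions $L$, $L_1$ (or Pawlikowski's function), which is precisely what makes your ``assembly'' step the hard one.

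For calibration: the paper does not prove Theorem JR at all --- it quotes it from \cite{JR} and \cite{KMS} --- but its own machinery for the analogous Theorem~\ref{t:4} displays the shape of the known successful argument, which is not a rank analysis but a proof by contradiction via a tree construction. Assuming $f$ is $\mathbf{\Sigma}^0_2$-measurable and not piecewise continuous, one builds (Lemma~\ref{lem3}, following the $\sigma$-ideal $\mathcal{I}_f$ and $f$-irreducibility technique of \cite{KMS}) a set $Z \cong \mathbb{Q}$ on which $f$ is injective with $f(Z)$ relatively discrete in $Y$. For resolvable-measurable $f$ this finishes instantly, since such a restriction cannot be resolvable-measurable; but that finish is unavailable for $\mathbf{\Delta}^0_2$-measurability --- the paper's Example (a bijection $\mathbb{Q} \rightarrow D$) \emph{is} $\mathbf{\Delta}^0_2$-measurable. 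This is exactly where the Souslin-$\mathcal{F}$ hypothesis is spent in \cite{KMS}: running the construction inside the Souslin scheme of $\widehat{X}$ with shrinking diameters produces a compact witness $K \subset X$ in which the rational-like set sits densely and is cut out, via the strong disjointness of the sets $U_s$ arranged along the construction, as $K \cap f^{-1}(U)$ for an open $U$; if $f$ were $\mathbf{\Delta}^0_2$-measurable this set would be $G_\delta$ in $K$, making a countable dense-in-itself space Polish --- a Baire-category contradiction. So your instinct that the extra content of $\mathbf{\Delta}^0_2$ must be cashed at one precise point is sound, but it is cashed in a single category argument on a compact witness, and the Souslin-$\mathcal{F}$ hypothesis supplies that witness (completeness along branches), not a countable ordinal bound; your proposed direct derivative route is not known to work and, as written, does not.
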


Later Solecki \cite[Theorem 3.1]{S98} proved the first dichotomy theorem for Baire class 1 functions. This theorem shows how piecewise continuous functions can be found among $\mathbf{\Sigma}^0_2$-measurable ones.

\begin{thmS}[Solecki]
Let $f \colon X \rightarrow Y$ be a $\mathbf{\Sigma}^0_2$-measurable function from an analytic set  $X$ to a separable metric space $Y$. Then precisely one of the following holds:

\begin{enumerate}[\upshape (i)]
\item $f$ is piecewise continuous,

\item one of $L$, $L_1$ is contained in $f$, where $L$ and $L_1$ are two so-called Lebesgue's functions.
\end{enumerate}
\end{thmS}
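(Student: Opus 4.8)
The plan is to prove the nontrivial direction of the dichotomy by a transfinite derivative analysis that detects $\sigma$-continuity with closed pieces, and, when that analysis fails, by a Cantor-scheme construction that plants a copy of one of the Lebesgue functions inside $f$. First I would record that the two alternatives cannot hold simultaneously: each Lebesgue function is itself not piecewise continuous, and if a subspace $A \subseteq X$ carries a homeomorphic copy of $L$ or $L_1$ while $X = \bigcup_n X_n$ with each $X_n$ closed and $f|_{X_n}$ continuous, then $A = \bigcup_n (A \cap X_n)$ exhibits $f|_A$ as $\sigma$-continuous with relatively closed pieces, contradicting the minimality of the Lebesgue functions; hence only the implication ``$f$ not piecewise continuous $\Rightarrow$ (ii)'' requires work. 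For the reductions I would embed the separable metric $Y$ in the Hilbert cube, and, since $X$ is analytic, either pass to a pullback $g = f \circ p$ along a continuous surjection $p \colon \omega^\omega \to X$ (noting $g^{-1}(U) = p^{-1}(f^{-1}(U))$ is again $F_\sigma$, modulo a transfer lemma for piecewise continuity and for containment of a Lebesgue function) or work natively with $\Sigma^1_1$ sets via the effective topology; either way I may assume the domain is Polish. On a metrizable domain, $\mathbf{\Sigma}^0_2$-measurability into a separable metric space coincides with Baire class $1$, so for every closed $C$ the restriction $f|_C$ has a dense $G_\delta$ set of continuity points in $C$.

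I would then introduce a derivative on closed sets. For closed $C$ put $D(C) = C \setminus \bigcup\{U : U \text{ is relatively open in } C \text{ and } f \text{ is continuous on } \mathrm{cl}_C(U)\}$, and iterate by $C_0 = X$, $C_{\alpha+1} = D(C_\alpha)$, $C_\lambda = \bigcap_{\alpha < \lambda} C_\alpha$. This is a decreasing chain of closed subsets of a Polish space, so it stabilises at a countable ordinal at some closed $C_\infty$. The key equivalence is that $f$ is piecewise continuous if and only if $C_\infty = \emptyset$: when $C_\infty = \emptyset$ the successive differences $C_\alpha \setminus C_{\alpha+1}$ furnish, after the standard bookkeeping turning the relatively open continuity pieces into countably many closed pieces, a decomposition witnessing piecewise continuity; conversely, if $C_\infty \neq \emptyset$ then $D(C_\infty) = C_\infty$, i.e.\ for every nonempty relatively open $U \subseteq C_\infty$ the map $f$ is discontinuous on $\mathrm{cl}_{C_\infty}(U)$. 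This last property is an everywhere-persistent, self-similar discontinuity, and it is precisely the fuel for the construction.

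The main step is to convert persistent discontinuity on $C_\infty$ into an embedded Lebesgue function. By a fusion recursion along the tree defining the domain of $L$ (respectively $L_1$), I would assign to each node $s$ a nonempty open $U_s \subseteq X$ and a basic open $V_s \subseteq Y$ so that the $U_s$ branch as that domain branches, shrink to single points (yielding a continuous injection $\varphi$ of the domain into $C_\infty$), and satisfy $f(U_s \cap C_\infty) \subseteq V_s$ with the $V_s$ reproducing the precise convergence pattern of the Lebesgue function; persistence of discontinuity is what lets me, inside any prescribed neighbourhood, select the spray of subpoints whose $f$-values realise the required oscillation. The limit map $\varphi$ together with the induced homeomorphism of ranges then exhibits $L$ or $L_1$ as contained in $f$, the choice between the two being forced by the local shape of the $F_\sigma$ decomposition of the relevant preimages, according to whether the oscillating values accumulate to a single limit or split into the second canonical pattern.

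I expect this synthesis step to be the principal obstacle, for two reasons. Topologically, the domains of $L$ and $L_1$ are specific, non-locally-compact subspaces of a product such as $(\omega+1)^\omega$, and realising one of them \emph{exactly} as a subspace of $X$ while keeping $\varphi$ an embedding demands careful control of diameters and separation of the $U_s$ across the fusion. Dynamically, one must force the images to follow one fixed Lebesgue pattern globally rather than a patchwork of ad hoc Baire class $1$ oscillations, and proving that a single pattern is always attainable is where the dichotomy genuinely resides. This is handled either by Solecki's original combinatorial study of the $\sigma$-ideal generated by the closed continuity pieces, or by relativising so that $f$ is lightface $\Sigma^0_2$ and running an unfolded Choquet game in the Gandy--Harrington topology on $C_\infty$, whose strong Choquet property delivers the embedding directly and, incidentally, dispenses with the reduction to the Polish case.
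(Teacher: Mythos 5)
Note first that the paper contains no proof of Theorem~S: it is quoted as background and attributed to Solecki \cite[Theorem 3.1]{S98}, so your attempt can only be measured against the literature and against the KMS-style machinery (\cite{KMS}, Lemmas~\ref{lem1}--\ref{lem3} here) that the paper imports for its own, related, construction.

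Measured that way, there are two genuine gaps. The first is the reduction to a Polish domain. Pulling back along a continuous surjection $p \colon \omega^\omega \rightarrow X$ transfers neither alternative: if $g = f \circ p$ is piecewise continuous with closed pieces $C_n \subset \omega^\omega$, continuity of $g \upharpoonright C_n$ gives no continuity of $f$ on $p(C_n)$ (the map $p$ is not open, convergent sequences in $p(C_n)$ need not lift, and $p(C_n)$ is in general only analytic, not closed in $X$); and if a copy of $L$ or $L_1$ sits inside $g$, the restriction of $p$ to the embedded domain need not be injective, so no copy inside $f$ is obtained. So the phrase ``modulo a transfer lemma'' conceals a step that is unproven and, as stated, false in both directions. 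This is exactly why Solecki works effectively with $\Sigma^1_1$ sets (Gandy--Harrington), and why Ka\v{c}ena, Motto Ros, and Semmes work directly in the metric space through the $\sigma$-ideal $\mathcal{I}_f$ rather than through any such reduction.

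The second and principal gap is that the heart of the theorem is deferred rather than proved. Your derivative characterization (piecewise continuity of $f$ iff $C_\infty = \emptyset$) is correct for a Polish domain, but it is the easy half, and the kernel it produces is too weak a hypothesis: since $f$ is of Baire class $1$, the restriction $f \upharpoonright C_\infty$ still has a comeager set of continuity points in $C_\infty$, so ``persistent discontinuity on closures of relatively open sets'' is a meager-scale phenomenon, strictly weaker than the property actually needed to drive the fusion, namely that no relatively open piece of the kernel belongs to $\mathcal{I}_f$, organized through irreducible pairs as in Lemmas~\ref{lem1} and \ref{lem2}. (Membership in $\mathcal{I}_f$ is not detected by a first-level continuity derivative.) At the decisive moment --- selecting, inside every neighborhood, values that follow \emph{one} of the two canonical convergence patterns globally, and showing one of exactly $L$, $L_1$ is always realizable --- you write that this is ``handled either by Solecki's original combinatorial study \ldots or by relativising \ldots the Gandy--Harrington topology.'' That step \emph{is} Theorem~S; citing it as the method leaves the proposal circular. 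For what the correct bookkeeping looks like at this scale, compare the proof of Lemma~\ref{lem3} above: starting from $X \notin \mathcal{I}_f$, the pigeonhole argument over strongly disjoint families $U_t$ and the pairwise strongly disjoint balls $V_t$ is what forces a single coherent pattern (there, an $L_1$-type one: a bijection of a copy of $\mathbb{Q}$ onto a relatively discrete set), and nothing in your sketch substitutes for that mechanism or for the criterion deciding between the two patterns.
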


Ka\v{c}ena, Motto Ros, and Semmes \cite[Theorem 1]{KMS} showed that Theorem JR holds for a regular space $Y$. They also got \cite[Theorem 8]{KMS} a strengthening of Solecki's theorem from an analytic set $X$ to an absolute Souslin-$\mathcal{F}$ set $X$.

On the other hand, Banakh and Bokalo \cite[Theorem 8.1]{BB} proved among other things that a mapping $f \colon X \rightarrow Y$ from a metrizable completely Baire space $X$ to a regular space $Y$ is piecewise continuous if and only if it is $\mathbf{\Pi}^0_2$-measurable. Under some set-theoretical assumptions, examples of $\mathbf{\Pi}^0_2$-measurable mappings which are not piecewise continuous were constructed in the work~\cite{BB}.

Recently, Ostrovsky \cite{Ost} proved that every resolvable-measurable function $f \colon X \rightarrow Y$ is countably continuous for any separable zero-dimensional metrizable spaces $X$ and $Y$.

The main result of the paper (see Theorem~\ref{t:4}) states that every resolvable-measurable mapping $f \colon X \rightarrow Y$ of a metrizable space $X$ to a regular space $Y$ is piecewise continuous. Comparison of our result and the Banakh and Bokalo theorem shows that the condition on $X$ is weakened but $f$ is restricted to the class of resolvable-measurable mappings. Notice also that Theorem~\ref{t:4} generalizes and strengthens the Ostrovsky theorem.

In completely metrizable spaces, resolvable sets coincide with $\mathbf{\Delta}^0_2$-sets, see \cite[p. 418]{Kur1}. Lemma~\ref{L:5} shows that every metrizable completely Baire space has such a property. This enables us to refine the above result of Banakh and Bokalo, see Theorem~\ref{t:6}.

Theorem~\ref{t:8} states that in the study of $\mathbf{\Sigma}^0_2$-measurable mappings defined on metrizable completely Baire spaces it suffices to consider separable spaces. In a sense, Theorem~\ref{t:8} is similar to the non-separable version of Solecki's Theorem~S.

\textbf{Notation}. For all undefined terms, see \cite{Eng}.

A subset $E$ of a space $X$ is \textit{resolvable} if it can be represented as
\begin{equation*}
E= (F_1 \setminus F_2) \cup (F_3 \setminus F_4) \cup \ldots \cup (F_\xi \setminus F_{\xi+1}) \cup  \ldots ,
\end{equation*}
where $\langle F_\xi \rangle$ forms a decreasing transfinite sequence of closed sets in $X$.

A metric space $X$ is said to be an \textit{absolute Souslin}-$\mathcal{F}$ set if $X$ is a result of the $\mathcal{A}$-operation applied to a system of closed subsets of $\widehat{X}$, where $\widehat{X}$ is the completion of $X$ under its metric. Metrizable continuous images of the space of  irrational numbers are called \textit{analytic sets}.

A mapping $f \colon X \rightarrow Y$ is said to be

\begin{enumerate}[$\bullet$]
\item \textit{resolvable-measurable} if $f^{-1}(U)$ is a resolvable subset of $X$ for every open set $U \subset Y$,

\item  $\mathbf{\Delta}^0_2$-\textit{measurable} if $f^{-1}(U) \in \mathbf{\Delta}^0_2(X)$ for every open set $U \subset Y$,

\item  $\mathbf{\Sigma}^0_2$-\textit{measurable} if $f^{-1}(U) \in \mathbf{\Sigma}^0_2(X)$ for every open set $U \subset Y$,

\item $\mathbf{\Pi}^0_2$-\textit{measurable} if $f^{-1}(U) \in \mathbf{\Pi}^0_2(X)$ for every open set $U \subset Y$,

\item  \textit{countably continuous} if $X$ can be covered by a sequence $X_0, X_1,  \ldots$ of sets such that the restriction $f \upharpoonright X_n$ is continuous for every $n \in \omega$,

\item  \textit{piecewise continuous} if $X$ can be covered by a sequence $X_0, X_1,  \ldots$ of closed sets such that the restriction $f \upharpoonright X_n$ is continuous for every $n \in \omega$.
\end{enumerate}

Obviously, every piecewise continuous mapping is countably continuous. Notice that every resolvable-measurable mapping of a metrizable space $X$ is  $\mathbf{\Sigma}^0_2$-measurable because, by \cite[p.~362]{Kur1}, every resolvable subset of a metrizable space $X$ is a $\mathbf{\Delta}^0_2$-set, i.e., a set that is both $F_\sigma$ and $G_\delta$ in $X$. The following example shows that there exists a $\mathbf{\Delta}^0_2$-measurable mapping which is not resolvable-measurable.

\begin{xexa}
Let $f \colon \mathbb{Q} \rightarrow D$ be a one-to-one mapping of the space $\mathbb{Q}$ of rational numbers onto the countable discrete space $D$.
Clearly, $f$ is piecewise continuous and  $\mathbf{\Delta}^0_2$-measurable. Gao and Kientenbeld \cite[Proposition 4]{GK} got a characterization of nonresolvable subsets of $\mathbb{Q}$. In particular, they showed that there exists a nonresolvable subset $A$ of $\mathbb{Q}$. Since $A = f^{-1}(f(A))$, the mapping $f$ is not resolvable-measurable.
\end{xexa}

The closure of a set $A \subset X$ is denoted by $\overline{A}$. Given a mapping $f \colon X \rightarrow Y$, let us denote by $\mathcal{I}_f$ the family of all subsets $A \subset X$ for which there is a set $S \in \mathbf{\Sigma}^0_2(X)$ such that $A \subset S$ and the restriction $f \upharpoonright S$ is piecewise continuous. In particular, $f$ is piecewise continuous if and only if $X \in \mathcal{I}_f$. From \cite[Proposition 3.5]{HZZ} it follows that the family $\mathcal{I}_f$ forms a $\sigma$-ideal which is $F_\sigma$ supported and is closed with respect to discrete unions, see also \cite{KMS}.

To prove Theorem~\ref{t:4}, we shall use the technique due to Ka\v{c}ena, Motto Ros, and Semmes \cite{KMS}. Therefore, the terminology from \cite{KMS} is applied. The sets $A, B \subset Y$ are \textit{strongly disjoint} if $\overline{A} \cap \overline{B} = \emptyset$. Let $f \colon X \rightarrow Y$ be a mapping. Put $A^f = f^{-1}(Y \setminus \overline{A})$. As noted in \cite{KMS}, if $A,B$ are strongly disjoint and $A^f , B^f \in \mathcal{I}_f$, then $X \in \mathcal{I}_f$.

Let $x \in X$, $X^\prime \subset X$, and $A \subset Y$. The pair $(x, X^\prime)$ is said to be $f$-\textit{irreducible outside} $A$ if for every open neighborhood $V \subset X$ of $x$ we have $A^f \cap X^\prime \cap V \notin \mathcal{I}_f$. Otherwise we say that $(x, X^\prime)$ is  $f$-\textit{reducible outside} $A$, i.e., there exist a neighborhood $V$ of $x$ and a set $S \in \mathbf{\Sigma}^0_2(X)$ such that $A^f \cap X^\prime \cap V \subset S$ and $f \upharpoonright S$ is piecewise continuous. Clearly, $x \in \overline{A^f \cap X^\prime}$ if $(x, X^\prime)$ is $f$-irreducible outside $A$.

\begin{lem}[{\cite[Lemma 3]{KMS}}]\label{lem1}
Let $X$ be a metrizable space and $Y$ a regular space. Suppose $f \colon X \rightarrow Y$ is a $\mathbf{\Sigma}^0_2$-measurable mapping, $X^\prime$ is a subset of $X$, and $A \subset Y$ is an open set such that $X^\prime \subset A^f$. Then the following assertions are equivalent:

\begin{enumerate}[\upshape(i)]

\item $X^\prime \notin \mathcal{I}_f$,

\item there exist a point $x \in \overline{X^\prime}$ and an open set $U \subset Y$ strongly disjoint from $A$ such that $f(x) \in U$ and the pair $(x, X^\prime)$ is $f$-irreducible outside $U$.
\end{enumerate}
\end{lem}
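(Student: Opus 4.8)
The plan is to prove the two implications separately; \textrm{(ii)}$\Rightarrow$\textrm{(i)} is immediate and \textrm{(i)}$\Rightarrow$\textrm{(ii)} carries all the content. For \textrm{(ii)}$\Rightarrow$\textrm{(i)}, suppose $x$ and $U$ are as in \textrm{(ii)}. Taking the neighborhood $V=X$ in the definition of $f$-irreducibility outside $U$ gives $U^f\cap X'\notin\mathcal{I}_f$. Since $U^f\cap X'\subseteq X'$ and $\mathcal{I}_f$ is downward closed (if $B\subseteq A\in\mathcal{I}_f$, the same witnessing $S$ works for $B$), this forces $X'\notin\mathcal{I}_f$, which is \textrm{(i)}.

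For \textrm{(i)}$\Rightarrow$\textrm{(ii)} I would argue by contraposition: assuming \textrm{(ii)} fails, I would deduce $X'\in\mathcal{I}_f$. The first tool is a local-to-global principle for $\mathcal{I}_f$: as $X$ is metrizable every open cover has a $\sigma$-discrete open refinement, and because $\mathcal{I}_f$ is a $\sigma$-ideal closed under discrete unions, a set $E\subseteq X$ belongs to $\mathcal{I}_f$ as soon as every point of $E$ has a neighborhood $V$ with $E\cap V\in\mathcal{I}_f$. Applied to $E=X'$, it reduces the task to attaching to each $x\in X'$ a neighborhood $V_x$ with $X'\cap V_x\in\mathcal{I}_f$. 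Fix $x\in X'$. Since $X'\subseteq A^f$ we have $f(x)\in Y\setminus\overline{A}$, so regularity of $Y$ yields an open $U_x\ni f(x)$ with $\overline{U_x}\cap\overline{A}=\emptyset$; thus $U_x$ is strongly disjoint from $A$ and eligible in the sense of \textrm{(ii)}. Because \textrm{(ii)} fails, $(x,X')$ must be $f$-reducible outside $U_x$, providing $V_x\ni x$ and $S_x\in\mathbf{\Sigma}^0_2(X)$ with $U_x^f\cap X'\cap V_x\subseteq S_x$ and $f\upharpoonright S_x$ piecewise continuous, whence $U_x^f\cap X'\cap V_x\in\mathcal{I}_f$.

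The decomposition $X'\cap V_x=(U_x^f\cap X'\cap V_x)\cup R_x$, with $R_x=X'\cap V_x\cap f^{-1}(\overline{U_x})$, isolates the difficulty: the first summand is already in $\mathcal{I}_f$, and everything hinges on the remainder $R_x$, on which $f$ takes values in $\overline{U_x}$. Here the pasting principle recorded before the lemma is the natural engine: if $P,Q\subseteq Y$ are strongly disjoint then $P^f\cup Q^f=X$, so $X'=(P^f\cap X')\cup(Q^f\cap X')$, and if both pieces lie in $\mathcal{I}_f$ then so does $X'$. A clean first consequence is that, assuming $X'\notin\mathcal{I}_f$, some eligible $U$ satisfies $U^f\cap X'\notin\mathcal{I}_f$: otherwise, choosing eligible $U_1$ and an eligible $U_2$ with $\overline{U_2}\cap(\overline{A}\cup\overline{U_1})=\emptyset$, the two sets $U_1^f\cap X'$ and $U_2^f\cap X'$ would both be in $\mathcal{I}_f$ and $U_1,U_2$ strongly disjoint, forcing $X'\in\mathcal{I}_f$ (the degenerate case $Y=\overline{A}\cup\overline{U_1}$ makes $U_1^f\cap X'=\emptyset$ and is treated separately).

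The main obstacle, however, is that this does not yet yield \textrm{(ii)}: the latter demands a \emph{condensation point} of $U^f\cap X'$ that maps \emph{into} $U$, whereas any condensation point lying inside $U^f\cap X'$ has $f$-image outside $\overline{U}$. One is thus forced to locate condensation points on the ``boundary'' and, equivalently, to control the remainders $R_x$. This cannot be done by intersecting the reductions coming from countably many shrinking eligible sets $U\ni f(x)$: the witnessing neighborhoods $V_x$ vary with $U$, and a point may be a condensation point (modulo $\mathcal{I}_f$) of $X'$ without being a condensation point of any single ``$f\notin\overline{U}$'' piece, so a naive countable exhaustion around a fixed point need not cover $X'$ nearby. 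I expect the heart of the proof to be a transfinite exhaustion of $Y\setminus\overline{A}$ by open sets strongly disjoint from $A$, in which each stage uses the failure of \textrm{(ii)} to push one further shell of $X'$ into $\mathcal{I}_f$ while enlarging the forbidden region $\overline{A}$, the pasting principle glues strongly disjoint stages, and the local-to-global principle concludes $X'\in\mathcal{I}_f$ once the shells exhaust $f(X')$. The role of $\mathbf{\Sigma}^0_2$-measurability is exactly to make this work through the $F_\sigma$-supportedness of $\mathcal{I}_f$, ensuring the reductions produced at the successive shells assemble into genuine $\mathbf{\Sigma}^0_2$ sets; organizing this bookkeeping so that the exhaustion terminates is the step I expect to be hardest.
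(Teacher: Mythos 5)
Your proposal does not reach a proof. First, a point of comparison: the paper itself gives no proof of this lemma --- it is quoted verbatim from \cite[Lemma 3]{KMS} --- so your attempt must stand on its own. The parts you actually carry out are fine: (ii)$\Rightarrow$(i) via $V=X$ and downward closure of $\mathcal{I}_f$ is correct; the local-to-global principle ($\sigma$-discrete refinements plus the fact that $\mathcal{I}_f$ is a $\sigma$-ideal, $F_\sigma$-supported and closed under discrete unions) is legitimate in a metrizable $X$; and you correctly isolate the remainder $R_x=X'\cap V_x\cap f^{-1}(\overline{U_x})$ as the crux. But everything after that is an announced expectation, not an argument, and the step you flag as hardest is the \emph{entire} content of (i)$\Rightarrow$(ii). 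So the main implication is unproved.

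Worse, the sketched plan is aimed in the wrong direction. Enlarging the forbidden region from $A$ to $A\cup U$ \emph{shrinks} the controlled set, since $(A\cup U)^f\subseteq A^f$: every stage of your exhaustion places into $\mathcal{I}_f$ only points whose $f$-values avoid the closure of the current forbidden region, so the sets $R_x\subseteq f^{-1}(\overline{U_x})$ --- exactly the sets you identified as the obstacle --- are never captured by any later shell. Nor can two stages at the same point help: eligible sets at $x$ must all contain $f(x)$, hence no two of them are strongly disjoint, so the pasting principle never applies at a single point (this is precisely the phenomenon behind Lemma~\ref{lem2}); capturing $z\in R_x$ requires a reduction anchored at some \emph{other} point whose image is pinned near $f(z)$, and in a merely regular $Y$ there is no metric scale to make such a covering terminate. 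You also misattribute the role of $\mathbf{\Sigma}^0_2$-measurability: $F_\sigma$-supportedness is a property of $\mathcal{I}_f$ that holds for any $f$ (it comes from \cite[Proposition 3.5]{HZZ}); where measurability genuinely bites is in writing $f^{-1}(U)=\bigcup_n C_n$ with $C_n$ closed, so that a non-ideal piece of a remainder lying in some $C_n\subseteq f^{-1}(U)$ has \emph{all its closure points} mapped into $U$, which is what allows the reduction hypothesis to be reapplied at limit points with their images under control. Finally, a transfinite union of stages is not automatically in $\mathcal{I}_f$: the ideal is closed under countable and discrete unions only, so any transfinite scheme needs an explicit $\sigma$-discrete decomposition of the layers, which your outline does not provide. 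In short: the easy direction and the scaffolding are right, but the heart of (i)$\Rightarrow$(ii) is missing, and the particular exhaustion you propose would not close the gap.
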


\begin{lem}[{\cite[Lemma 4]{KMS}}]\label{lem2}
Let $f \colon X \rightarrow Y$ be a mapping of a metrizable space $X$ to a regular space $Y$, $x \in X$, $X^\prime \subset X$, $A \subseteq Y$, and let $U_0, \ldots, U_k$ be a sequence of pairwise strongly disjoint open subsets of $Y$. If $(x; X^\prime)$ is $f$-irreducible outside $A$, then there is at most one $i \in \{ 0, \ldots, k \}$ such that $(x, X^\prime)$ is $f$-reducible outside $A \cup U_i$.
\end{lem}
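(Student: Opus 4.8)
The plan is to argue by contradiction, exploiting the fact recorded above that $\mathcal{I}_f$ is a $\sigma$-ideal, so in particular hereditary and closed under finite unions. I would assume that $(x, X')$ is $f$-reducible outside both $A \cup U_i$ and $A \cup U_j$ for two distinct indices $i \ne j$, and derive that $(x, X')$ is then $f$-reducible outside $A$ as well, contradicting the irreducibility hypothesis. Unwinding the definition, the two reducibility assumptions furnish open neighborhoods $V_i$ and $V_j$ of $x$ with $(A \cup U_i)^f \cap X' \cap V_i \in \mathcal{I}_f$ and $(A \cup U_j)^f \cap X' \cap V_j \in \mathcal{I}_f$; I would then work throughout with the common neighborhood $V = V_i \cap V_j$.

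The computational heart is to rewrite $A^f$ along $U_i$. Since $\overline{A \cup U_i} = \overline{A} \cup \overline{U_i}$, passing to preimages gives $(A \cup U_i)^f = A^f \cap f^{-1}(Y \setminus \overline{U_i})$, and hence
\begin{equation*}
A^f = (A \cup U_i)^f \cup \bigl( A^f \cap f^{-1}(\overline{U_i}) \bigr).
\end{equation*}
Intersecting with $X' \cap V$, the first summand sits inside $(A \cup U_i)^f \cap X' \cap V_i$, so it lies in $\mathcal{I}_f$ by heredity. Everything then reduces to placing the second summand, intersected with $X' \cap V$, into $\mathcal{I}_f$.

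This is the step I expect to be the crux, and it is where strong disjointness is essential. Because $U_i$ and $U_j$ are strongly disjoint, $\overline{U_i} \cap \overline{U_j} = \emptyset$, so any point whose image lies in $\overline{U_i}$ has image outside $\overline{U_j}$; combined with lying in $A^f$ (image outside $\overline{A}$), such a point has image outside $\overline{A} \cup \overline{U_j} = \overline{A \cup U_j}$. This yields the inclusion
\begin{equation*}
A^f \cap f^{-1}(\overline{U_i}) \subseteq (A \cup U_j)^f .
\end{equation*}
Intersecting with $X' \cap V$ and using $V \subseteq V_j$, the left side embeds in $(A \cup U_j)^f \cap X' \cap V_j \in \mathcal{I}_f$, so it too belongs to $\mathcal{I}_f$.

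Putting the pieces together, $A^f \cap X' \cap V$ is a union of two members of $\mathcal{I}_f$ and is therefore itself in $\mathcal{I}_f$; since $V$ is a neighborhood of $x$, this is exactly the statement that $(x, X')$ is $f$-reducible outside $A$, the desired contradiction. The one subtlety worth flagging is the asymmetric use of the two indices: the decomposition of $A^f$ is taken along $U_i$ while the strong-disjointness inclusion is applied to $U_j$, so two distinct reducibility witnesses are genuinely needed, which is precisely what makes \emph{at most one} the sharp conclusion.
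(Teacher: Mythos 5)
Your proof is correct. The paper itself gives no proof of this lemma (it is quoted from \cite[Lemma 4]{KMS}), and your argument is exactly the standard one from that source: strong disjointness of $U_i$ and $U_j$ yields $A^f \subseteq (A \cup U_i)^f \cup (A \cup U_j)^f$ (your two-summand decomposition is an equivalent phrasing), so reducibility outside both $A \cup U_i$ and $A \cup U_j$ would, via the hereditary and finite-union properties of the $\sigma$-ideal $\mathcal{I}_f$ on the common neighborhood $V_i \cap V_j$, force reducibility outside $A$ itself.
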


Recall that a set $A \subset Y$ is \textit{relatively discrete} in $Y$ if for every point $a \in A$ there is an open set $U \subset Y$ such that $U \cap A = \{a \}$.

\begin{lem}\label{lem3}
Let $X$ be a metrizable space and $Y$ be a regular space. Suppose  $f \colon X \rightarrow Y$ is a $\mathbf{\Sigma}^0_2$-measurable mapping which is not piecewise continuous. Then there exists a subset $Z \subset X$ such that:
\begin{enumerate}[\upshape (1)]

\item $Z$ is homeomorphic to the space of rational numbers,

\item the restriction $f \upharpoonright Z$ is a bijection,

\item the set $f(Z)$ is relatively discrete in $Y$,

\item $\dim \overline{Z} = 0$.
\end{enumerate}
\end{lem}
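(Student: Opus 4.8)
Since $f$ is not piecewise continuous we have $X \notin \mathcal{I}_f$. The plan is to build a tree of points $\{x_s : s \in \omega^{<\omega}\} \subseteq X$ together with open sets $\{U_s : s \in \omega^{<\omega}\}$ in $Y$ so that: (a) $f(x_s) \in U_s$; (b) for each $s$ the children $x_{s^\frown n}$ are pairwise distinct and $x_{s^\frown n} \to x_s$ in $X$; and (c) all the sets $U_s$ are \emph{pairwise strongly disjoint}. I then set $Z = \{x_s : s \in \omega^{<\omega}\}$. Granting (a)--(c), properties (2) and (3) are immediate: if $s \neq t$ then $f(x_s) \in U_s$, $f(x_t) \in U_t$ and $U_s \cap U_t = \emptyset$, so $f\upharpoonright Z$ is injective, while $U_s$ is an open neighbourhood of $f(x_s)$ meeting $f(Z)$ only in $f(x_s)$, so $f(Z)$ is relatively discrete. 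For (1), $Z$ is countable and metrizable, and by (b) every $x_s$ is a limit of the distinct points $x_{s^\frown n} \in Z$, so $Z$ has no isolated points; by Sierpi\'nski's characterization $Z$ is homeomorphic to $\mathbb{Q}$.

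The construction uses Lemmas~\ref{lem1} and~\ref{lem2}. For the root, apply Lemma~\ref{lem1} with $X' = X$ and $A = \emptyset$ (so $A^f = f^{-1}(Y\setminus\overline{\emptyset}) = X \notin \mathcal{I}_f$); this yields $x_\emptyset$, an open $U_\emptyset \ni f(x_\emptyset)$, and irreducibility of $(x_\emptyset, X)$ outside $U_\emptyset$. Inductively I maintain, for each constructed node $s$, a set $X_s \subseteq X$ with $(x_s, X_s)$ $f$-irreducible outside $U_s$ and the domain invariant that every point of $X_s$ maps off $\overline{U_t}$ for all proper ancestors $t \prec s$. I build the whole tree in $\omega$ stages, in a bookkeeping order that gives every node infinitely many children; at the stage creating a new child $x_{s^\frown n}$ I apply Lemma~\ref{lem1} to the domain $f^{-1}(Y\setminus\overline{B})\cap X_s \cap B(x_s, r_{s,n})$, where $B$ is the union of \emph{all} the open sets already chosen and $r_{s,n}\to 0$. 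Irreducibility of $(x_s, X_s)$ (together with the invariant, which makes the ancestor part of $B$ automatic on $X_s$) keeps this domain outside $\mathcal{I}_f$, and Lemma~\ref{lem1} returns a child whose new region $U_{s^\frown n}$ is strongly disjoint from $B$ and hence from every previously chosen $U_t$; since $f(x_{s^\frown n}) \in U_{s^\frown n}$ is strongly disjoint from $U_s \ni f(x_s)$ we get $x_{s^\frown n} \neq x_s$, and $x_{s^\frown n} \in \overline{B(x_s,r_{s,n})}$ gives the convergence in (b). The invariant passes to $X_{s^\frown n}$ because its points map off $\overline{B}$, which contains the closures of all ancestors of $s^\frown n$.

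For property (4) I exploit the freedom in the choice of the child $x_{s^\frown n}$: since $Z$ is countable, at each stage only countably many radii are forbidden, so I may choose the radii $r_{s,n}$ so that each sphere $\{x : d(x,x_s) = r_{s,n}\}$ misses $Z$, the closed balls carrying the subtrees of distinct children of $s$ are disjoint and nested at shrinking scales, and the diameters tend to $0$ down each branch. Then these balls are relatively clopen in $\overline{Z}$ and form a base there, whence $\operatorname{ind}\overline{Z} = \dim\overline{Z} = 0$. This part is routine given the latitude in placing the $x_{s^\frown n}$ near $x_s$.

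\emph{The main obstacle} is clause (c) — keeping the newly peeled regions strongly disjoint from \emph{all} earlier ones while never stalling, i.e. producing infinitely many children at each node. The danger is that, for a node $s$, the mass of $X_s$ that escapes $\overline{B}$ concentrates toward a single already-used region, so that no fresh strongly disjoint child can be found. This is precisely where Lemma~\ref{lem2} enters: applied with base equal to the ancestor regions and the (pairwise strongly disjoint) family of regions already built, it shows that \emph{at most one} direction can be ``absorbing''. Thus at each step only one direction is blocked, and this single direction is circumvented by shrinking the candidate region using the regularity of $Y$ (a smaller target leaves a larger escape set, hence is easier to keep irreducible) before committing to $U_{s^\frown n}$. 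Turning this ``at most one bad direction'' count into a clean guarantee that the peeling continues forever, in a $Y$ that is merely regular and need not be first countable, is the technical heart of the argument.
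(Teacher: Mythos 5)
There is a genuine gap at the central step of your construction. You maintain, for each node $s$, only that $(x_s,X_s)$ is $f$-irreducible outside $U_s$ (equivalently, by your domain invariant, outside the union of the ancestor regions). But when you create a child you cut the domain down to $f^{-1}(Y\setminus\overline{B})\cap X_s\cap B(x_s,r_{s,n})$, where $B$ is the union of \emph{all} regions chosen so far, including regions of non-ancestors (siblings, cousins, nodes from other parts of your bookkeeping order). Irreducibility outside $U_s$ gives no control over removing $f^{-1}(\overline{U_t})$ for such $t$: the entire non-$\mathcal{I}_f$ mass of $(U_s)^f\cap X_s\cap V$ may map into a single already-used $\overline{U_t}$, in which case your cut-down domain lies in $\mathcal{I}_f$, Lemma~\ref{lem1} is inapplicable, and the construction stalls. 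Your proposed repair --- Lemma~\ref{lem2} plus ``shrink the candidate region by regularity'' --- does not work: Lemma~\ref{lem2} bounds, \emph{for one fixed pair}, the number of bad members of a strongly disjoint family, and shrinking the new region $U$ need not restore irreducibility of an old pair (for instance, if the obstructing mass of that pair maps to the point $f(x^*)$ itself, then \emph{every} neighborhood $U'\ni f(x^*)$ is bad, however small). The paper avoids the stall by maintaining a \emph{global} invariant (its condition (5)): every constructed pair $(x_t,V_t)$ is irreducible outside the union of \emph{all} regions chosen so far. This invariant is preserved by a candidate-generation argument you do not have: when adding a node, the paper recursively builds $k+1$ candidate points $z_0,\dots,z_k$ with pairwise strongly disjoint candidate regions $U_0,\dots,U_k$ (where $k$ is the number of existing nodes), notes via Lemma~\ref{lem2} that each old pair is reducible outside $A\cup U_j$ for at most one $j$, and pigeonholes to find one candidate that keeps all old pairs irreducible simultaneously. (The paper also uses a binary tree with $x_{s^\wedge 0}=x_s$, so only finitely many pairs exist at each stage --- finiteness is what makes the pigeonhole work.) You explicitly flag this as ``the technical heart'' and leave it uncarried-out; without it the proof is incomplete precisely where the difficulty lies.

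The argument for clause (4) is also flawed. Choosing $r_{s,n}$ so that the sphere $\{x:d(x,x_s)=r_{s,n}\}$ misses the countable set $Z$ does not make the ball's trace clopen in $\overline{Z}$: the sphere may still meet $\overline{Z}$ (a priori $\overline{Z}$ can be large --- think of $\mathbb{Q}\subset\mathbb{R}$, whose closure every sphere around a rational meets), and a point of $\overline{Z}$ on the sphere can be a limit of $Z$-points from both sides, destroying openness of the trace. What is needed is an annulus around each sphere free of $Z$, i.e.\ covers of $Z$ at every scale by \emph{finitely many pairwise strongly disjoint} open sets of small diameter --- exactly the paper's condition (6), after which $\overline{Z}\subseteq\bigcup_{t\in 2^n}\overline{V_t}$ at each level, and the paper passes to discrete enlargements $\mathcal{W}_n$ and applies the Vop\v{e}nka theorem. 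Your $\omega$-branching makes even this repair awkward, since the children's balls accumulate at the parent, so the level families are infinite and not discrete at parent nodes; the paper's binary tree with $x_{s^\wedge 0}=x_s$ realizes the convergence through the nested chain $V_{s0^n}$ and keeps every level family finite and strongly disjoint, sidestepping both problems at once.
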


\begin{proof}
Fix a metric $\rho$ on $X$. Denote by $2^{<\omega}$ the set of all binary sequences of finite length. The construction will be carried out by induction with respect to the order $\preceq$ on $2^{<\omega}$ defined by
\[s \preceq t \, \Longleftrightarrow \, \length(s) < \length(t) \vee (\length(s) = \length(t) \wedge s \leq_\mathrm{lex} t), \]
where $\leq_\mathrm{lex}$ is the usual lexicographical order on $2^{\length(s)}$. We write $s \prec t$ if $s \preceq t$ and $s \neq t$.

We will construct a sequence $\langle x_s \colon s \in 2^{<\omega} \rangle$ of points of $X$, a sequence $\langle V_s \colon s \in 2^{<\omega} \rangle$ of subsets of $X$, and a sequence $\langle U_s \colon s \in 2^{<\omega} \rangle$ of open subsets of $Y$ such that for every $s \in 2^{<\omega}$:

\begin{enumerate}[\upshape (1)]

\item if $t \subset s$ then $V_s \subset V_t$,

\item $V_s$ is an open ball in $X$ with the centre $ x_s$ and radius  $\leq 2^{- \length(s)}$,

\item if $s = t^\wedge 0$ then $x_s = x_t,$

\item $f(x_s) \in U_s$,

\item $(x_t, V_t)$ is $f$-irreducible outside $A$ for every $t \preceq s$, where $A = \bigcup_{u \preceq s}U_u$,

\item the family $\{V_t \colon t \in 2^{n} \}$ is pairwise strongly disjoint for every $n \in \omega$,

\item the family $\{U_t \colon t \preceq s \}$ is pairwise strongly disjoint.
\end{enumerate}

Since $f$ is not piecewise continuous, we can apply Lemma~\ref{lem1} with respect to $X^\prime = X$ and $A = \emptyset$ to obtain the point $x \in X$ and the open set $U \subset Y$. Then put $ x_\emptyset = x$ and $U_\emptyset = U$. Let $V_\emptyset= B(x_\emptyset, 1)$ be an open ball in $X$ with the centre $ x_\emptyset$ and radius 1.

Assume that $x_t$, $V_t$, and $U_t$ have been constructed for any $t \preceq s$. Put $x_{s^\wedge 0} = x_s$ and $U_{s^\wedge 0} = U_s$.

Let $A= \bigcup_{t \prec s^\wedge 1}U_t$ and $O= Y \setminus \overline{A}$.
By the inductive hypothesis, the pair $(x_s, V_s)$ is $f$-irreducible outside $A$. Take a neighborhood $W$ of $x_s$ such that $\overline{W} \subset V_s$. Then  $(x_s, W)$ is $f$-irreducible outside $A$ and $f^{-1}(O) \cap W = A^f \cap W \notin \mathcal{I}_f$. By Lemma~\ref{lem1} there exist a point $x^\prime \in \overline{f^{-1}(O) \cap W}$ and an open set $U_{x^\prime} \subset Y$ strongly disjoint from $A$ such that $f(x^\prime) \in U_{x^\prime}$ and the pair $(x^\prime, f^{-1}(O) \cap W)$ is $f$-irreducible outside $U_{x^\prime}$. Notice that $x^\prime \neq x_s$ because $f(x_s) \in A$ and $\overline{U_{x^\prime}} \cap \overline{A} = \emptyset$. If the pair $(x^\prime, f^{-1}(O) \cap W)$ is $f$-irreducible outside $A \cup U_{x^\prime}$, put $x^* = x^\prime$ and $U^* = U_{x^\prime}$.

Consider the case when the pair $(x^\prime, f^{-1}(O) \cap W)$ is $f$-reducible outside $A \cup U_{x^\prime}$. Take a neighborhood $W^\prime$ of $x^\prime$ such that $\overline{W^\prime} \subset V_s$. Let
\[O^\prime = Y \setminus (\overline{A} \cup \overline{U_{x^\prime}}) \text{  and } X^\prime = f^{-1}(O^\prime) \cap W \cap W^\prime.\]
Then the pair $(x^\prime, X^\prime)$ is $f$-irreducible outside $U_{x^\prime}$ and $X^\prime \notin \mathcal{I}_f$. As above, by Lemma~\ref{lem1} there exist a point $x^{\prime \prime} \in \overline{X^\prime}$ and an open set $U_{x^{\prime \prime}} \subset Y$ strongly disjoint from $A \cup U_{x^\prime}$ such that $f(x^{\prime \prime}) \in U_{x^{\prime \prime}}$ and the pair $(x^{\prime \prime}, X^\prime)$ is $f$-irreducible outside $U_{x^{\prime \prime}}$. Notice that $x^{\prime \prime} \neq x_s$ and $x^{\prime \prime} \neq x^\prime$. From Lemma~\ref{lem2} it follows that the pair $(x^{\prime \prime}, X^\prime)$ is $f$-irreducible outside $A \cup U_{x^{\prime \prime}}$. Then put $x^* = x^{\prime \prime}$ and $U^* = U_{x^{\prime \prime}}$.

Let $k = | \{t \in 2^{< \omega} \colon t \prec s^\wedge 1 \} |$, $z_0 = x^*$, and $U_0 = U^*$. Repeating the above construction, for $j = 0, \ldots, k$ recursively construct $z_j \in V_s$ and $U_j$ such that $f(z_j) \in U_j$, $U_j$ is strongly disjoint from $A_j= A \cup \bigcup_{i <j}U_i$, and the pair $(z_j, V_s \cap (A_j)^f)$ is $f$-irreducible outside $A \cup U_j$. From Lemma~\ref{lem2} it follows that for each $t \prec s^\wedge 1$ there is at most one $j \in \{ 0, \ldots, k \}$ such that $(x_t, V_t)$ is $f$-reducible outside $A \cup U_j$. The pigeonhole principle implies that there exists $\ell \in \{ 0, \ldots, k \}$ such that the pair $(z_\ell, V_s \cap (A_\ell )^f)$ is $f$-irreducible outside $A \cup U_\ell$ and  $(x_t, V_t)$ is $f$-irreducible outside $A \cup U_\ell$ for each $t \prec s^\wedge 1$. Finally, set $x_{s^\wedge 1} = z_\ell$ and $U_{s^\wedge 1} = U_\ell$.

Since $x_{s^\wedge 0}$ and $x_{s^\wedge 1}$ are two distinct points from $V_s$, we can choose their neighborhoods $V_{s^\wedge 0}$ and $V_{s^\wedge 1}$, respectively,  according to (1),(2), and (6).

One readily verifies that conditions (1)--(7) are satisfied.

The set $Z = \bigcup \{x_s \colon s \in 2^{<\omega} \}$ is countable and has no isolated points by (1) and (2). According to the Sierpi\'{n}ski theorem (see \cite[Exercise 6.2.A]{Eng}), $Z$ is homeomorphic to the space of rational numbers. By construction, the set $\bigcup \{f(x_s) \colon s \in 2^{<\omega}  \}$ consists of isolated points. From conditions (4) and (5) it follows that the restriction $f \upharpoonright Z$ is a bijection.

From conditions (1) and (2) it follows that the family $\mathcal{V}_n = \{V_t \colon t \in 2^n \}$ forms a cover of $Z$ by open sets of diameter $\leq 2^{1-n}$ for each $n \in \omega$. Then
$$\overline{Z} \subset \bigcap \bigl\{\bigcup \{\overline{V_t} \colon t \in 2^n \} \colon  n \in \omega \bigr\}.$$

Since the family $\mathcal{V}_n$ is finite and pairwise strongly discrete, we can find a pairwise strongly discrete open family $\mathcal{W}_n = \{W_t \colon t \in 2^{n} \}$ such that $\mathrm{diam}(W_t) < 2^{2-n}$ and  $\overline{V_t} \subset W_t$ for each $t \in 2^{n}$. Without loss of generality, each $\mathcal{W}_{n+1}$ is a refinement of $\mathcal{W}_n$. Every family $\{W \cap \overline{Z} \colon W \in \mathcal{W}_n \}$, $n \in \omega$, forms a discrete open cover of $\overline{Z}$. From the Vop\v{e}nka theorem (see \cite[Theorem 7.3.1]{Eng}) it follows that $\dim \overline{Z} = 0$.
\end{proof}

\begin{thm}\label{t:4}
Every resolvable-measurable mapping $f \colon X \rightarrow Y$ of a metrizable space $X$ to a regular space $Y$ is piecewise continuous.
\end{thm}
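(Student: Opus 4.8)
The plan is to argue by contradiction, reducing the situation to the one already isolated in the Example: a bijection of $\mathbb{Q}$ onto a countable discrete space fails to be resolvable-measurable precisely because $\mathbb{Q}$ carries a nonresolvable subset. So I would suppose $f$ is resolvable-measurable but \emph{not} piecewise continuous and aim to manufacture a nonresolvable set whose existence is incompatible with resolvable-measurability.

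First I would invoke the observation recorded before the Example: since $X$ is metrizable, every resolvable subset of $X$ is a $\mathbf{\Delta}^0_2$-set, so $f$ is in particular $\mathbf{\Sigma}^0_2$-measurable. Under the contradiction hypothesis the assumptions of Lemma~\ref{lem3} are met, and I would apply it to extract a set $Z \subset X$ that is homeomorphic to $\mathbb{Q}$, on which $f$ restricts to a bijection, and whose image $f(Z)$ is relatively discrete in $Y$. Only properties (1)--(3) of Lemma~\ref{lem3} seem to be needed here; the dimension-zero conclusion (4) is presumably reserved for the later theorems.

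Next I would establish two inheritance facts. On one hand, resolvability passes to subspaces: intersecting a decreasing transfinite sequence $\langle F_\xi \rangle$ of closed subsets of $X$ with $Z$ produces a decreasing sequence $\langle F_\xi \cap Z \rangle$ of closed subsets of $Z$ with the same alternating-difference structure, so $f^{-1}(U) \cap Z$ is resolvable in $Z$ for every open $U \subset Y$; that is, $f \upharpoonright Z \colon Z \to Y$ is again resolvable-measurable. On the other hand, since $f(Z)$ is relatively discrete, it is a discrete subspace of $Y$, so every subset $B \subset f(Z)$ is relatively open and hence of the form $B = U \cap f(Z)$ for some open $U \subset Y$. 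Because $f \upharpoonright Z$ is a bijection, these combine to give: for each $A \subset Z$, choosing $U$ with $U \cap f(Z) = f(A)$ yields $f^{-1}(U) \cap Z = A$, whence $A$ is resolvable in $Z$. Thus \emph{every} subset of $Z$ would be resolvable.

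Finally I would transport a nonresolvable subset of $\mathbb{Q}$ (which exists by the Gao and Kientenbeld result cited in the Example) across the homeomorphism $Z \cong \mathbb{Q}$ to obtain a nonresolvable $A_0 \subset Z$, contradicting the previous paragraph and forcing $f$ to be piecewise continuous. The genuine difficulty is concentrated in Lemma~\ref{lem3}, which is already proved; granting it, the step that most needs care is the passage from relative discreteness of $f(Z)$ to the statement that arbitrary subsets of $Z$ are pullbacks of open sets of $Y$, together with the routine but essential verification that resolvability really does restrict to subspaces.
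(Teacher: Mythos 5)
Your proposal is correct and takes essentially the same route as the paper: assume $f$ is not piecewise continuous, apply Lemma~\ref{lem3} to obtain $Z \subset X$ homeomorphic to $\mathbb{Q}$ with $f \upharpoonright Z$ a bijection onto a relatively discrete image, and contradict resolvable-measurability via the Gao--Kieftenbeld nonresolvable subset of $\mathbb{Q}$. The only difference is that you spell out what the paper delegates to its Example --- the heredity of resolvability to subspaces and the fact that relative discreteness of $f(Z)$ makes every subset of $Z$ a pullback of an open set of $Y$ --- both of which you verify correctly (and, as you note, conclusion (4) of Lemma~\ref{lem3} is indeed not needed here).
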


\begin{proof}
Suppose towards a contradiction that there is a resolvable-measur\-able mapping $f \colon X \rightarrow Y$ which is not piecewise continuous. Using Lemma \ref{lem3}, we can find a subset $Z \subset X$ such that $Z$ is homeomorphic to the space of rational numbers, the restriction $f \upharpoonright Z$ is a bijection, and $f(Z)$ is relatively discrete.
Since $f$ is a resolvable-measurable mapping, $f \upharpoonright Z$ is the same. On the other hand, $f \upharpoonright Z$ fails to be resolvable-measurable as shown in Example.
\end{proof}

\begin{cor}
Let $f \colon X \rightarrow Y$ be a bijection between metrizable spaces $X$ and $Y$ such that $f$ and $f^{-1}$ are both resolvable-measurable mappings. Then
$\dim X = \dim Y$.
\end{cor}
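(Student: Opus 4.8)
The plan is to use Theorem~\ref{t:4} to turn both $f$ and $f^{-1}$ into piecewise continuous maps, build a single countable closed cover of $X$ on whose members $f$ restricts to a homeomorphism, and then transport dimension across it by means of the countable closed sum theorem and the monotonicity of $\dim$ for closed subspaces.

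First, since every metrizable space is regular, Theorem~\ref{t:4} applies to the resolvable-measurable mappings $f$ and $f^{-1}$ and shows that both are piecewise continuous. Hence $X$ admits a countable closed cover $\{X_n : n \in \omega\}$ with each $f \upharpoonright X_n$ continuous, and $Y$ admits a countable closed cover $\{Y_m : m \in \omega\}$ with each $f^{-1} \upharpoonright Y_m$ continuous.

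Next I would pass to the common refinement $X_{n,m} = X_n \cap f^{-1}(Y_m)$. Since $f \upharpoonright X_n$ is continuous and $Y_m$ is closed, $X_{n,m} = (f \upharpoonright X_n)^{-1}(Y_m)$ is closed in $X_n$, hence in $X$; and the sets $X_{n,m}$ cover $X$. On such a piece $f \upharpoonright X_{n,m}$ is a continuous bijection onto $f(X_{n,m})$, and because $f(X_{n,m}) \subset Y_m$ its inverse is a restriction of the continuous map $f^{-1} \upharpoonright Y_m$; thus $f \upharpoonright X_{n,m}$ is a homeomorphism of $X_{n,m}$ onto $f(X_{n,m})$. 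Using that $f$ is a bijection, $f(X_{n,m}) = f(X_n) \cap Y_m = (f^{-1}\upharpoonright Y_m)^{-1}(X_n)$ is closed in $Y_m$, hence in $Y$, and the sets $f(X_{n,m})$ cover $Y$.

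Finally I would invoke two standard facts valid for the covering dimension $\dim$ in normal, in particular metrizable, spaces (see \cite{Eng}): monotonicity for closed subspaces, giving $\dim X_{n,m} \le \dim X$ and $\dim f(X_{n,m}) \le \dim Y$; and the countable closed sum theorem, giving $\dim X \le \sup_{n,m}\dim X_{n,m}$ and $\dim Y \le \sup_{n,m}\dim f(X_{n,m})$. Combining these yields $\dim X = \sup_{n,m}\dim X_{n,m}$ and $\dim Y = \sup_{n,m}\dim f(X_{n,m})$. As $X_{n,m}$ and $f(X_{n,m})$ are homeomorphic, the corresponding dimensions are equal term by term, so the two suprema agree and $\dim X = \dim Y$. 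I expect the only real care to be dimension-theoretic bookkeeping: outside the separable case one must fix the covering dimension $\dim$ (which equals $\mathrm{Ind}$ for metrizable $X$ by the Kat\v{e}tov--Morita theorem) and check that the closed-subspace monotonicity and the countable closed sum theorem are being applied to genuinely closed members of a normal space, which is exactly what the construction of the $X_{n,m}$ and $f(X_{n,m})$ guarantees.
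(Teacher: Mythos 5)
Your proof is correct and follows essentially the same route as the paper: both form the common closed refinement $X_n \cap f^{-1}(Y_m)$ of the two piecewise-continuity covers supplied by Theorem~\ref{t:4}, observe that $f$ restricts to a homeomorphism on each piece, and conclude via the countable (closed) sum theorem for $\dim$. Your write-up merely makes explicit some verifications the paper leaves to the reader, such as why $X_n \cap f^{-1}(Y_m)$ and $f(X_n) \cap Y_m$ are closed and why the restricted inverse is continuous.
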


\begin{proof}
Theorem~\ref{t:4} implies that $X = \bigcup_{n \in \omega}A_n $, where each $A_n$ is closed in $X$  and each restriction $f \upharpoonright A_n$ is continuous. Similarly, $Y = \bigcup_{k \in \omega}B_k $, where each $B_k$ is closed in $Y$  and each restriction $f^{-1} \upharpoonright B_k$ is continuous. The sequence $\langle A_n \cap f^{-1}(B_k) \colon n \in \omega, k \in \omega \rangle$ forms a cover of $X$ by closed sets. Similarly, the sequence $\langle f(A_n) \cap B_k \colon n \in \omega, k \in \omega \rangle$ forms a cover of $Y$ by closed sets. Since $f \upharpoonright (A_n \cap f^{-1}(B_k))$ is a homeomorphism, we have
\[\dim (A_n \cap f^{-1}(B_k)) = \dim (f(A_n) \cap B_k).\]
The corollary follows from the countable sum theorem \cite[Theorem 7.2.1]{Eng}.
\end{proof}

A topological space $X$ is called a \textit{Baire space} if the intersection of countably many dense open sets in $X$ is dense; or equivalently every nonempty open set in $X$ is not of the first category. A space $X$ is \textit{completely Baire} if every closed subspace of $X$ is a Baire space. Recall that $F \subset X$ is a \textit{boundary set} in $X$ if its complement is dense, i.e., if $\overline{X \setminus F} = X$.

\begin{lem}\label{L:5}
For a metrizable space $X$ the following conditions are equivalent:

\begin{enumerate}[\upshape (i)]

\item no closed subspace of $X$ is homeomorphic to the space $\mathbb{Q}$ of rational numbers,

\item $X$ is a completely Baire space,

\item the $\mathbf{\Delta}^0_2(X)$-sets coincide with the resolvable sets in $X$.
\end{enumerate}

\end{lem}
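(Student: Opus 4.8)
The plan is to prove the cycle (i)$\Rightarrow$(ii)$\Rightarrow$(iii)$\Rightarrow$(i). The implication (i)$\Rightarrow$(ii) is the Hurewicz-type characterization of hereditarily Baire metrizable spaces: a closed copy of $\mathbb{Q}$ is a closed subspace that is meager in itself, so condition (i) is exactly the hypothesis that theorem requires, and I would invoke it rather than reprove it (the converse (ii)$\Rightarrow$(i) being immediate, since $\mathbb{Q}$ is not a Baire space). Thus the genuinely new content is the coupling of (iii) with the other two conditions.

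For (ii)$\Rightarrow$(iii), recall that in a metrizable space every resolvable set is already a $\mathbf{\Delta}^0_2$-set, so it suffices to show that if $X$ is completely Baire then every $E\in\mathbf{\Delta}^0_2(X)$ is resolvable. I would run the classical bilateral-boundary derivative: set $F_0=X$ and, given the closed set $F_\xi$, let $F_{\xi+1}$ consist of the points of $F_\xi$ lying simultaneously in the $F_\xi$-closures of $E\cap F_\xi$ and of $F_\xi\setminus E$, taking intersections at limit stages. Each $F_\xi$ is closed, the sequence is decreasing, and a short check shows that $E$ is relatively clopen on every layer $F_\xi\setminus F_{\xi+1}$, which is exactly the data of a resolvable representation once the sequence reaches $\emptyset$. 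The crucial point is termination: if the process stabilized at a nonempty closed set $F_*$, then both $E\cap F_*$ and $F_*\setminus E$ would be dense in $F_*$; writing each as an $F_\sigma$ in $F_*$ (possible since $E\in\mathbf{\Delta}^0_2$), each would be a dense $F_\sigma$ with empty interior, hence meager in $F_*$, so $F_*$ would be meager in itself, contradicting that the closed subspace $F_*$ is Baire. Hence $F_*=\emptyset$ and $E$ is resolvable.

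For (iii)$\Rightarrow$(i) I would argue by contraposition through the Example. Suppose some closed $F\subseteq X$ is homeomorphic to $\mathbb{Q}$. By Gao and Kieftenbeld there is a nonresolvable $A\subseteq F$, and since $F$ is countable both $A$ and $F\setminus A$ are countable, so $A$ is both $F_\sigma$ and $G_\delta$ in $F$, i.e. $A\in\mathbf{\Delta}^0_2(F)$. I then lift this to $X$: the set $A$ is $F_\sigma$ in $X$ as a countable union of singletons, and $A=G\cap F$ for some $G\in\mathbf{\Pi}^0_2$-complementary $G_\delta$ set of $X$, while $F$, being closed in a metrizable space, is itself $G_\delta$ in $X$; as an intersection of two $G_\delta$ sets, $A$ is $G_\delta$ in $X$, so $A\in\mathbf{\Delta}^0_2(X)$. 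On the other hand $A$ cannot be resolvable in $X$, for restricting to $F$ a decreasing sequence of closed sets witnessing resolvability in $X$ would witness resolvability of $A=A\cap F$ in $F$. Thus $A$ is a $\mathbf{\Delta}^0_2(X)$-set that is not resolvable, so (iii) fails.

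The main obstacle is the termination of the derivative in (ii)$\Rightarrow$(iii): everything there hinges on converting the statement that $E$ and its complement are both locally dense on the stabilized set into a meagerness statement, and this is precisely the point at which the completely Baire hypothesis is consumed. The only external input is the Hurewicz characterization used for (i)$\Rightarrow$(ii), which in the non-separable metrizable setting I would cite rather than reprove.
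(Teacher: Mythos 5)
Your proposal is correct and follows essentially the same route as the paper's proof: (i)$\Rightarrow$(ii) by citing the non-separable Hurewicz-type theorem (the paper invokes Medvedev/van Douwen to extract a closed copy of $\mathbb{Q}$ from a first-category closed subspace), the same Baire-category punchline that a nonempty closed set on which both $E$ and its complement are dense boundary $F_\sigma$-sets would be of the first category in itself, and (iii)$\Rightarrow$(i) by lifting the Gao--Kieftenbeld nonresolvable $\mathbf{\Delta}^0_2$ subset of a closed copy of $\mathbb{Q}$ to $X$ exactly as the paper does. The only difference is packaging: where you run the transfinite bilateral derivative and verify its termination and the clopenness of $E$ on the layers, the paper simply cites Kuratowski's criterion (\cite[p.~99]{Kur1}) that $E$ is resolvable if and only if no nonempty closed $F$ has both $F \cap E$ and $F \setminus E$ boundary sets in $F$ --- precisely the fact your derivative re-derives.
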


\begin{proof}
(i)$\Rightarrow$(ii): Suppose towards a contradiction that $X$ is not a completely Baire space. Then there is a closed set $F \subset X$ which is not Baire. Hence we can find a nonempty open (in $F$) set $U \subset F$ of the first category in $F$. The closure $\overline{U}$ is of the first category on itself. According to \cite[Corollary 1]{M86} (see also \cite{D87}) $\overline{U}$ contains a closed copy of $\mathbb{Q}$, a contradiction.

(ii)$\Rightarrow$(iii): By \cite[p.~362]{Kur1}, every resolvable set in a metrizable space is a $\mathbf{\Delta}^0_2$-set.

Conversely, let $E \in \mathbf{\Delta}^0_2(X)$ and $F$ be an arbitrary non-empty closed set. According to \cite[p.~99]{Kur1}, we have to show that that either $F \cap E$ or $F \setminus E$ is not a boundary set in $F$. Otherwise, the sets $F \cap E$ and $F \setminus E$ would be of the first category in $F$ (because every boundary $\mathcal{F}_\sigma$-set is of the first category), so their union $F = (F \cap E) \cup (F \setminus E)$ would be of the first category on $F$. This contradicts the fact that $F$ is a Baire space.

(iii)$\Rightarrow$(i): Striving for a contradiction, suppose that $X$ contains a closed set $F$ which is homeomorphic to $\mathbb{Q}$. As shown in Example, there is a nonresolvable set $A \in \mathbf{\Delta}^0_2(F)$. The set $A$ is the same in $X$ because $F$ is closed in $X$.
\end{proof}

\begin{thm}\label{t:6}
Let $f \colon X \rightarrow Y$ be a mapping of a metrizable completely Baire space $X$ to a regular space $Y$. Then the following conditions are equivalent:

\begin{enumerate}[\upshape (i)]

\item $f$ is resolvable-measurable,

\item $f$ is piecewise continuous,

\item $f$ is $\mathbf{\Pi}^0_2$-measurable.
\end{enumerate}
\end{thm}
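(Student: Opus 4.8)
The plan is to verify a cycle of implications, leaning on the two main results already available. The equivalence (ii)$\Leftrightarrow$(iii) requires no new argument at all: it is precisely the theorem of Banakh and Bokalo \cite[Theorem 8.1]{BB}, which asserts that a mapping from a metrizable completely Baire space to a regular space is piecewise continuous if and only if it is $\mathbf{\Pi}^0_2$-measurable. Thus the genuine task is to establish (i)$\Leftrightarrow$(ii), and here the completely Baire hypothesis enters only through Lemma~\ref{L:5}.

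For (i)$\Rightarrow$(ii) I would simply invoke Theorem~\ref{t:4}, which holds for an arbitrary metrizable domain; the Baire hypothesis is not needed in this direction. The heart of the matter is (ii)$\Rightarrow$(i). I would first observe, again without using the Baire assumption, that every piecewise continuous $f$ is in fact $\mathbf{\Delta}^0_2$-measurable. Write $X = \bigcup_{n} A_n$ with each $A_n$ closed and $f \upharpoonright A_n$ continuous. For an open set $U \subset Y$, each set $(f \upharpoonright A_n)^{-1}(U)$ is relatively open in $A_n$, hence an $F_\sigma$-set in the metrizable space $X$; taking the union over $n$ shows $f^{-1}(U) \in \mathbf{\Sigma}^0_2(X)$. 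Dually, for a closed $C \subset Y$ each $(f \upharpoonright A_n)^{-1}(C)$ is closed in $A_n$ and therefore closed in $X$, so $f^{-1}(C)$ is $F_\sigma$; applying this to $C = Y \setminus U$ gives $f^{-1}(U) \in \mathbf{\Pi}^0_2(X)$. Consequently $f^{-1}(U) \in \mathbf{\Delta}^0_2(X)$ for every open $U$.

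To pass from $\mathbf{\Delta}^0_2$-measurability to resolvable-measurability I would use the completely Baire hypothesis. Since $X$ is a metrizable completely Baire space, the implication (ii)$\Rightarrow$(iii) of Lemma~\ref{L:5} guarantees that the $\mathbf{\Delta}^0_2(X)$-sets are exactly the resolvable sets in $X$. Hence each $f^{-1}(U)$, being a $\mathbf{\Delta}^0_2$-set, is resolvable, so $f$ is resolvable-measurable, completing (ii)$\Rightarrow$(i) and with it the whole equivalence.

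I do not expect a serious obstacle beyond this bookkeeping; the single conceptual ingredient is the coincidence of $\mathbf{\Delta}^0_2$-sets with resolvable sets furnished by Lemma~\ref{L:5}. The one point demanding care is the $\mathbf{\Pi}^0_2$ half of the computation: it is essential that the pieces $A_n$ be \emph{closed}, so that preimages of closed subsets of $Y$ under the restrictions remain closed in $X$. This is exactly the feature that distinguishes piecewise continuity from mere countable continuity, and it is what upgrades the conclusion from $\mathbf{\Sigma}^0_2$-measurability to full $\mathbf{\Delta}^0_2$-measurability, making Lemma~\ref{L:5} applicable.
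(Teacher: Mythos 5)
Your proposal is correct and follows essentially the same route as the paper: (i)$\Rightarrow$(ii) via Theorem~\ref{t:4}, (ii)$\Leftrightarrow$(iii) via Banakh--Bokalo, and (ii)$\Rightarrow$(i) by observing that a closed decomposition makes preimages of both open and closed subsets of $Y$ into $F_\sigma$-sets, so that $f^{-1}(U) \in \mathbf{\Delta}^0_2(X)$, and then applying Lemma~\ref{L:5} to conclude resolvability. Your write-up merely spells out the $F_\sigma$ computation (and the role of the pieces being closed) in slightly more detail than the paper does.
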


\begin{proof}
The implication (i)$\Rightarrow$(ii) follows from Theorem~\ref{t:4}.

(ii)$\Rightarrow$(i): By definition, there are closed sets $X_n \subset X$, $n \in \omega$, such that $\bigcup_{n \in \omega}X_n = X$ and each $f \upharpoonright X_n$ is continuous. Then
$$f^{-1}(A) = \bigcup \{X_n \cap f^{-1}(A) \colon n \in \omega \}$$
is an $\mathcal{F}_\sigma$-set in $X$ for every open (or closed) set $A \subset Y$. Hence $f^{-1}(U) \in \mathbf{\Delta}^0_2(X)$ for every open $U \subset Y$. From Lemma~\ref{L:5} it follows that $f^{-1}(U)$ is a resolvable set in $X$.

Banakh and Bokalo \cite[Theorem 8.1]{BB} got (ii)$\: \Leftrightarrow$ (iii).
\end{proof}

\begin{cor}
Let $X$ be a completely metrizable space and $Y$ a regular space.
Then $f \colon X \rightarrow Y$ is resolvable-measurable if and only if $f$ is $\mathbf{\Pi}^0_2$-measurable.
\end{cor}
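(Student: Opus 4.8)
The plan is to deduce the statement directly from Theorem~\ref{t:6} by checking that the hypothesis there — that $X$ be a metrizable completely Baire space — is automatically met when $X$ is completely metrizable. Since a completely metrizable space is in particular metrizable, the only thing to verify is that such an $X$ is completely Baire, i.e., that every closed subspace of $X$ is a Baire space.

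First I would fix a complete metric $\rho$ on $X$ and let $F \subset X$ be an arbitrary closed subset. Because $F$ is closed in a complete metric space, the restriction $\rho \upharpoonright F$ is again a complete metric, so $F$ is itself completely metrizable. By the classical Baire category theorem every completely metrizable space is a Baire space, whence $F$ is Baire. As $F$ was an arbitrary closed subspace, this shows that $X$ is completely Baire.

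Having established that $X$ is a metrizable completely Baire space, I would then apply Theorem~\ref{t:6} with the given regular target $Y$. The equivalence (i)$\Leftrightarrow$(iii) of that theorem asserts precisely that $f$ is resolvable-measurable if and only if $f$ is $\mathbf{\Pi}^0_2$-measurable, which is the desired conclusion. No genuine obstacle arises here: the passage from ``completely metrizable'' to ``completely Baire'' is a routine consequence of the Baire category theorem, and all the substantive work has already been carried out in Theorem~\ref{t:6} (and, underlying it, in Theorem~\ref{t:4}, Lemma~\ref{L:5}, and the Banakh--Bokalo equivalence (ii)$\Leftrightarrow$(iii)). Thus the corollary is simply a specialization of Theorem~\ref{t:6} to the better-behaved class of completely metrizable domains.
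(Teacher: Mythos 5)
Your proposal is correct and matches the paper's intent exactly: the corollary is stated there without proof as an immediate specialization of Theorem~\ref{t:6}, resting on precisely the observation you verify, namely that a completely metrizable space is completely Baire because each closed subspace is completely metrizable and hence Baire by the Baire category theorem. Your write-up simply makes this routine reduction explicit.
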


According to \cite[Corollary 6]{KMS}, for an absolute Souslin-$\mathcal{F}$ set $X$, if $f \colon X \rightarrow Y$ is $\mathbf{\Sigma}^0_2$-measurable and not piecewise continuous, then there is a copy $K \subset X$ of the Cantor space $2^\omega$ such that $f \upharpoonright K$ has the same properties.
The following theorem shows that a similar statement is valid for metrizable completely Baire spaces. However, such a set $K$ from Theorem~\ref{t:8} need not be homeomorphic to the Cantor space. In fact, every Bernstein set is a metrizable completely Baire space but it contains no copy of the Cantor space.

\begin{thm}\label{t:8}
Let $X$ be a metrizable completely Baire space and $Y$ a regular space. If
$f \colon X \rightarrow Y$ is $\mathbf{\Sigma}^0_2$-measurable and not piecewise continuous, then there is a zero-dimensional separable closed set $K \subset X$ such that the restriction $f \upharpoonright K$ is the same.
\end{thm}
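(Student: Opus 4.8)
The plan is to feed $f$ into Lemma~\ref{lem3} and take $K = \overline{Z}$, where $Z \subset X$ is the set that lemma produces from a $\mathbf{\Sigma}^0_2$-measurable, non-piecewise-continuous mapping. First I would dispose of the easy requirements. Since $Z$ is homeomorphic to $\mathbb{Q}$ it is a countable dense subset of $K = \overline{Z}$, so $K$ is separable; condition (4) of Lemma~\ref{lem3} gives $\dim K = \dim \overline{Z} = 0$; and $K$ is closed in $X$ by construction. Because $f$ is $\mathbf{\Sigma}^0_2$-measurable, for every open $U \subset Y$ we have $f^{-1}(U) \in \mathbf{\Sigma}^0_2(X)$, whence $(f \upharpoonright K)^{-1}(U) = f^{-1}(U) \cap K \in \mathbf{\Sigma}^0_2(K)$; thus $f \upharpoonright K$ is again $\mathbf{\Sigma}^0_2$-measurable.

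It remains to prove that $f \upharpoonright K$ is \emph{not} piecewise continuous, and this is the heart of the matter. Suppose it were; then $K = \bigcup_{n \in \omega} F_n$ with each $F_n$ closed in $K$ and each $f \upharpoonright F_n$ continuous. Since $X$ is completely Baire and $K$ is closed in $X$, the space $K$ is a Baire space, so it cannot be covered by countably many nowhere dense sets. Hence some $F_n$ has nonempty interior $O$ in $K$, and $f \upharpoonright O$ is continuous. This is precisely the step where the completely Baire hypothesis is used.

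Next I would exploit the density of $Z$ in $K$ together with the relative discreteness of $f(Z)$. Pick a point $z \in O \cap Z$, which is nonempty because $Z$ is dense in $K$. By condition (3) of Lemma~\ref{lem3} there is an open set $W \subset Y$ with $W \cap f(Z) = \{f(z)\}$. Continuity of $f \upharpoonright O$ yields an open neighborhood $N$ of $z$ in $O$ (hence open in $K$, and so $N \cap Z$ is open in $Z$) with $f(N) \subset W$, giving $f(N \cap Z) \subset W \cap f(Z) = \{f(z)\}$. Since $f \upharpoonright Z$ is injective by condition (2), this forces $N \cap Z = \{z\}$, exhibiting $z$ as an isolated point of $Z$. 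This contradicts $Z$ being homeomorphic to $\mathbb{Q}$, which has no isolated points. Therefore $f \upharpoonright K$ is not piecewise continuous, and $K = \overline{Z}$ is the required set.

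I expect the main obstacle to be exactly this non-piecewise-continuity step: the delicate part is to convert a hypothetical closed cover witnessing piecewise continuity into a single open set on which $f$ is continuous, which is where the Baire property of $K$ (equivalently, the completely Baire hypothesis on $X$) is essential, and then to transfer continuity on that open set to the dense copy of $\mathbb{Q}$ and collide it with the relative discreteness of $f(Z)$ and the injectivity of $f \upharpoonright Z$. The remaining verifications (separability, zero-dimensionality, and $\mathbf{\Sigma}^0_2$-measurability of the restriction) are routine once $K = \overline{Z}$ is chosen.
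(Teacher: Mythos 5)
Your proposal is correct and follows essentially the same route as the paper: take $K = \overline{Z}$ from Lemma~\ref{lem3}, use the Baire property of the closed set $K$ to find a piece with nonempty interior, and derive a contradiction by combining continuity there with the relative discreteness of $f(Z)$ and the injectivity of $f \upharpoonright Z$ to produce an isolated point of $Z$. The only difference is cosmetic (you work with the interior $O$ directly where the paper passes through $\overline{V_j \cap Z}$, and you spell out the routine verifications the paper leaves implicit).
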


\begin{proof}
Let $K = \overline{Z}$, where the set $Z \subset X$ is obtained by Lemma \ref{lem3}. Clearly, $f \upharpoonright K$ is $\mathbf{\Sigma}^0_2$-measurable.

Suppose towards a contradiction that $f \upharpoonright K$ is piecewise continuous. Then there are closed sets $K_n \subset X$, $n \in \omega$, such that $\bigcup_{n \in \omega}K_n = K$ and $f \upharpoonright K_n$ is continuous. Since $K$ is a Baire space, there exists a $K_j$ with the nonempty interior $V_j$ (in $K$). Clearly, $f \upharpoonright \overline{V_j \cap Z}$ is continuous.  Take a point $q \in V_j \cap Z$. Fix a neighborhood $U_q \subset Y$ of $f(q)$ such that $U_q \cap f(Z) = f(q)$. From continuity of $f \upharpoonright \overline{V_j \cap Z}$ it follows that there is a neighborhood $V \subset V_j$ (in $K$) of $q$ such that $f(V) \subset U_q$. Then $V \cap Z = \{ q \}$, i.e., $q$ is an isolated point of $Z$. This contradicts the fact that the set $V_j \cap Z$ has no isolated points.
\end{proof}

The last theorem yields

\begin{thm}
Let $f \colon X \rightarrow Y$ be an $F_\sigma$-measurable mapping of a metrizable completely Baire space $X$ to a regular space $Y$. If the restriction $f \upharpoonright Z$ is piecewise continuous for any zero-dimensional separable closed subset $Z$ of $X$, then $f$ is piecewise continuous.
\end{thm}

\end{document}